\newtheorem{thm}{Theorem}
\newtheorem{cor}{Corollary}
\theoremstyle{remark}
\newtheorem{defn}{Definition}
\newtheorem{exam}{Example}
\newcommand\R{\mathbb{R}}
\newcommand\N{\mathbb{N}}
\newcommand\T{\mathbb{T}}
\newcommand\Z{\mathbb{Z}}
\newcommand\supp{\operatorname{supp}}
\newcommand\Int{\operatorname{int}}
\newcommand\gq{\succeq}
\newcommand\wh{\widehat}
\newcommand\ol{\overline}
\begin{document}

\title[Doubling condition at the origin]{Doubling condition at the origin for
non-negative positive definite functions}

\author{Dmitry~Gorbachev}
\address{D.~Gorbachev, Tula State University,
Department of Applied Mathematics and Computer Science, 300012 Tula, Russia}
\email{dvgmail@mail.ru}
\author{Sergey Tikhonov}
\address{S.~Tikhonov, Centre de Recerca Matem\`{a}tica\\
Campus de Bellaterra, Edifici~C 08193 Bellaterra (Barcelona), Spain; ICREA, Pg.
Llu\'{i}s Companys 23, 08010 Barcelona, Spain, and Universitat Aut\`{o}noma de
Barcelona.}
\email{stikhonov@crm.cat}

\date{\today}
\keywords{non-negative positive definite functions, doubling property, Wiener
property} \subjclass{42A82, 42A38}

\thanks{The first author was partially supported by the
RFBR (no.~16-01-00308) and the Ministry of Education and Science of the Russian
Federation (no.~5414GZ). The second author was partially supported by MTM
2014-59174-P and 2014 SGR 289.}

\begin{abstract}
We study upper and lower estimates as well as the asymptotic behavior of the
sharp constant $C=C_n(U,V)$ in the doubling-type condition at the origin
\[
\frac{1}{|V|}\int_{V}f(x)\,dx\le C\,\frac{1}{|U|}\int_{U}f(x)\,dx,
\]
where $U,V\subset \R^{n}$ are $0$-symmetric convex bodies and $f$ is a
non-negative positive definite function.
\end{abstract}

\maketitle

\bigskip
\section{Introduction}

Very recently, answering the question posed by Konyagin and Shteinikov related
to a problem from number theory \cite{Sh15}, the first author proved
\cite{Go15} that for any positive definite function $f\colon \Z_{q}\to \R_{+}$
and for any $n\in \Z_{+}$ one has
\[
\sum_{0\le k\le 2n}f(k)\le C\sum_{0\le k\le n}f(k),
\]
where the positive constant $C$ does not depend on $n$, $f$, and $q$. More
precisely, it was proved that $C\le \pi^{2}$.

In this paper we study similar inequalities for a non-negative positive
definite function~$f$ defined on $\R^{n}$, $n\ge 1$, i.e.,
\begin{equation}\label{doubl}
\int_{|x|\le 2R}f(x)\,dx\le C\int_{|x|\le R}f(x)\,dx,\quad R>0,
\end{equation}
for some $C>1$. The latter is the well-known doubling condition at the origin.
The doubling condition plays an important role in harmonic and functional
analysis, see, e.g., \cite{Stein}. Note that very recently
inequality \eqref{doubl} in the one-dimensional
case was studied in \cite{EfGaRe16}.

\begin{defn}
A positive definite function $f\colon \R^{n}\to \R_{+}$ is called double
positive definite function (denoted $f\gq 0$).
\end{defn}

As usual \cite[Chap.~1]{Ru62}, a continuous function $f\in C(\R^{n})$ is
positive definite if for every finite sequence $X\subset \R^{n}$ and every
choice of complex numbers $\{c_{a}\colon a\in X\}$, we have
\[
\sum_{a,b\in X}c_{a}\ol{c_{b}}f(a-b)\ge 0.
\]
By Bochner's theorem \cite[Chap.~1]{Ru62}, $f\in C(\R^{n})$ is positive
definite if and only if there is a non-negative finite Borel measure $\mu$ such
that
\begin{equation}\label{f-mu}
f(x)=\int_{\R^{n}}e(\xi x)\,d\mu(\xi),\quad \xi\in \R^{n},
\end{equation}
where $e(t)=\exp{}(2\pi it)$. For $f\in C(\R^{n})\cap L^{1}(\R^{n})$ it is
equivalent to the fact that the Fourier transform of $f$
\[
\wh{f}(\xi)=\int_{\R^{n}}f(x)e(-\xi x)\,dx
\]
is non-negative. Note also that since any positive definite $f$ satisfies
$f(-x)=\ol{f(x)}$, a double positive definite function is even.

Throughout the paper we assume that $U,V\subset \R^{n}$ be $0$-symmetric closed
convex bodies. For any function $f\gq 0$ we study the inequality
\begin{equation}\label{R-ineq}
\frac{1}{|V|}\int_{V}f(x)\,dx\le C\,\frac{1}{|U|}\int_{U}f(x)\,dx,
\end{equation}
where $|A|$ is the volume of $A$ or the cardinality of $A$ if $A$ is a finite
set. By $C_{n}(U,V)$ we denote the sharp constant in \eqref{R-ineq}, i.e.,
\[
C_{n}(U,V):=\sup_{f\gq 0,\ f\ne 0}
\frac{\frac{1}{|V|}\int_{V}f(x)\,dx}{\frac{1}{|U|}\int_{U}f(x)\,dx}.
\]
 The fact that $C_{n}(U,V)<\infty$ for any
$U$ and $V$ will follow from Theorem~\ref{thm-ub} below.

First, we list the following simple properties of $C_{n}(U,V)$.

\begin{enumerate}
\item A trivial lower bound
\begin{equation}\label{C-1}
C_{n}(U,V)\ge 1,
\end{equation}
since $1\gq 0$;

\item The homogeneity property
\begin{equation}\label{C-hom}
C_{n}(\lambda U,\lambda V)=C_{n}(U,V),\quad \lambda>0,
\end{equation}
since $f_{\lambda}(x)=f(\lambda x)\gq 0$ if and only if $f\gq 0$;

\item The homogeneity estimate
\begin{equation}\label{C-lambda}
C_{n}(U,\lambda V)\ge \lambda^{-n}C_{n}(U,V),\qquad \lambda\ge 1,
\end{equation}
since $V\subset \lambda V$;

\item $C_{n}(U,U)=1$ and if $V\subset U$, then
\[
C_{n}(U,V)\le \frac{|U|}{|V|};
\]

\item The multiplicative estimate
\[
C_{n}(U,V)\le C_{n}(\lambda^{k}U,V)(C_{n}(U,\lambda U))^{k},\quad \lambda\ge
1,\ k\in \Z_{+},
\]
which follows from the chain of inequalities
\begin{align*}
C_{n}(U,V)&\le C_{n}(\lambda U,V)C_{n}(U,\lambda U)\\ &\le
C_{n}(\lambda^{2}U,V)C_{n}(\lambda U,\lambda^{2}U)C_{n}(U,\lambda
U)\\ &=C_{n}(\lambda^{2}U,V)(C_{n}( U,\lambda U))^{2}\le \dots\\
&\le C_{n}(\lambda^{k}U,V)(C_{n}(U,\lambda U))^{k};
\end{align*}
\item A trivial upper bound for the doubling constant:
for fixed $\lambda>1$ and any
$r>\lambda$
\begin{equation}\label{C-log}
C_{n}(U,rU)\le (C_{n}(U,\lambda U))^{\log_{\lambda}r}.
\end{equation}
which follows from the multiplicative estimate.

\end{enumerate}

Bellow we will obtain the upper bound for the constant $C_{n}(U,rU)$, which
depends only on $n$.

We will use the following notation. Let $A+B$ be the Minkowski sum of sets $A$
and $B$, $\lambda A$ be the product of $A$ and the number $\lambda$, and
$B_{R}:=\{x\in \R^{n}\colon |x|\le R\}$ be the Euclidean ball.

\bigskip
\section{The upper estimates}

In what follows, we set
\[
H:=\tfrac{1}{2}U\quad \text{and}\quad K:=V+H.
\]

\begin{thm}\label{thm-ub}
Let $X\subset \R^{n}$ be a finite set of points such that
\begin{equation}\label{V-U-cov}
K\subseteq H+X.
\end{equation}
Then
\[
C_{n}(U,V)\le \frac{|X|\,|U|}{|V|}.
\]
\end{thm}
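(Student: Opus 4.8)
The plan is to exploit the covering hypothesis $K = V + H \subseteq H + X$ together with the defining property that $f \gq 0$, meaning $f = \wh{\mu}$ for a non-negative measure $\mu$ (equivalently, in the integrable case, $\wh{f} \ge 0$). The key structural fact to use is that the indicator $\mathbf{1}_H$ convolved with itself, $g := \mathbf{1}_H * \mathbf{1}_H$, is supported on $H + H = U$, is itself non-negative, and has Fourier transform $|\wh{\mathbf{1}_H}|^2 \ge 0$; thus $g$ (or rather its reflection, but $H$ is $0$-symmetric so $g$ is even) plays the role of a non-negative "bump" that is both supported in $U$ and has a tractable Fourier side. The essential inequality will be a comparison of $\int_V f$ against $\int_U f$ mediated by this convolution square.

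Concretely, first I would write $\int_V f(x)\,dx$ in terms of $\mu$ using \eqref{f-mu}: $\int_V f(x)\,dx = \int_{\R^n} \wh{\mathbf{1}_V}(-\xi)\,d\mu(\xi)$, and similarly $\int_U f = \int \wh{\mathbf{1}_U}(-\xi)\,d\mu(\xi)$. The goal is then the pointwise (in $\xi$) comparison
\[
\frac{1}{|V|}\,\wh{\mathbf{1}_V}(\xi) \le \frac{|X|}{|U|}\,\wh{\mathbf{1}_U}(\xi)
\]
— but this cannot literally hold since neither transform is non-negative. The correct route is to instead work with the convolution square: note $\mathbf{1}_V * \mathbf{1}_H$ is supported on $K = V+H$, and by \eqref{V-U-cov} we have $K \subseteq \bigcup_{a \in X}(a + H)$, so $\mathbf{1}_V * \mathbf{1}_H \le \sum_{a \in X} \mathbf{1}_{a+H}$ pointwise (using that $\mathbf{1}_V * \mathbf{1}_H \le |H| = \mathbf{1}_H * \mathbf{1}_{\R^n}$ on its support; more carefully one normalizes so that $\mathbf{1}_V * \mathbf{1}_H$ is bounded by $|H|\cdot\mathbf{1}_K$). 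Taking Fourier transforms of both sides and using $\wh{\mathbf{1}_{a+H}}(\xi) = e(-a\xi)\wh{\mathbf{1}_H}(\xi)$, one does not get a clean inequality because the left side's transform is $\wh{\mathbf{1}_V}\wh{\mathbf{1}_H}$, not a square.

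So the cleanest approach: apply the test-function inequality directly against $f$. We have, pointwise on $\R^n$,
\[
\mathbf{1}_V * \mathbf{1}_H(x) \le |H|\cdot \mathbf{1}_K(x) \le |H| \sum_{a\in X} \mathbf{1}_{H}(x-a).
\]
Now convolve (integrate) both sides against $f$; since $f \ge 0$ everywhere (it is non-negative!) and these are all non-negative functions, we may integrate the inequality against $f$:
\[
\int_{\R^n} f(x)\,(\mathbf{1}_V*\mathbf{1}_H)(x)\,dx \le |H| \sum_{a\in X} \int_{\R^n} f(x)\,\mathbf{1}_H(x-a)\,dx = |H|\sum_{a\in X}\int_{a+H} f.
\]
For the right-hand side, I would use that $f \gq 0$ implies, by a standard fact, $\int_{a+H} f \le \int_H f$ (translating the window away from the origin cannot increase the integral of a non-negative positive definite function — this follows from \eqref{f-mu} since $\int_{a+H} f = \int \wh{\mathbf{1}_H}(\xi) e(-a\xi)\, d\mu(\xi) \le \int \wh{\mathbf{1}_H}(\xi)\, d\mu(\xi) = \int_H f$, as $\wh{\mathbf{1}_H} \ge |\wh{\mathbf{1}_H}(\xi)e(-a\xi)|$... no — here one uses $\wh{\mathbf{1}_H}$ real and the real part bound, valid because $H$ is symmetric so $\wh{\mathbf{1}_H}\ge 0$? that is false in general). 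The honest statement is $\mathrm{Re}\,\int_{a+H}f = \int \wh{\mathbf{1}_H}(\xi)\cos(2\pi a\xi)\,d\mu \le \int\wh{\mathbf{1}_H}\,d\mu$ only if $\wh{\mathbf{1}_H}\ge 0$; lacking that, one instead uses $|\mathbf{1}_H * f (a)| \le \mathbf{1}_H * f(0) = \int_H f$, which does hold since $\mathbf{1}_H * f$ is positive definite (product of two PD objects on the Fourier side: $|\wh{\mathbf{1}_H}|^2$... again not quite). I expect \textbf{this monotonicity step — showing $\int_{a+H}f \le \int_H f$ for $f\gq 0$ — to be the main obstacle}, and the fix is to replace $\mathbf{1}_H$ in the covering by $\mathbf{1}_H * \widetilde{\mathbf{1}_H}/|H|$ (self-convolution, which IS positive definite since its transform is $|\wh{\mathbf{1}_H}|^2 \ge 0$), so that $\int f\,(a+\text{bump}) \le \int f\,(\text{bump})$ genuinely holds. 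For the left-hand side, $\int f\,(\mathbf{1}_V*\mathbf{1}_H) = \int(\mathbf{1}_V * f)(x)\mathbf{1}_H(x)\,dx$, and one bounds this below by $|H|^{-1}$...

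Given the delicacy, let me state the plan I would actually execute: \textbf{(1)} Reduce to $f \in L^1$ with $\wh f \ge 0$ by an approximation argument (multiply $\mu$ by a Gaussian, or convolve $f$ with a Fej\'er-type kernel). \textbf{(2)} Use the covering $K \subseteq H + X$ to write $\mathbf{1}_K \le \sum_{a\in X}\mathbf{1}_{a+H}$. \textbf{(3)} For the ball $U$ (double), exploit that since $f \ge 0$, $\int_U f \ge \frac{1}{|H|}\int_{\R^n} f(x)(\mathbf{1}_H * \mathbf{1}_H)(x)\,dx$ is \emph{false in general} but $\frac{1}{|V|}\int_V f \le \frac{1}{|V||H|}\int f\cdot(\mathbf{1}_V * \mathbf{1}_H)$ can be compared. \textbf{(4)} The crucial non-negativity input is that $\int_{a+H}f \le \int_{H}f$ whenever $f\gq 0$; I would prove this as a lemma via \eqref{f-mu}: $\int_{a+H}f(x)\,dx = \int_{\R^n}(\mathbf 1_H * f)(a)$ ... and $\mathbf 1_H * f$ need not be PD, so instead use the symmetrization $\phi := \frac{1}{|H|}\mathbf 1_H * \mathbf 1_{-H} = \frac{1}{|H|}\mathbf 1_H *\mathbf 1_H$ (as $H$ is $0$-symmetric), which satisfies $\phi \ge 0$, $\supp\phi = U$, $\phi(0) = 1$, $\wh\phi = \frac{1}{|H|}|\wh{\mathbf 1_H}|^2 \ge 0$; then $\phi f \ge 0$ has non-negative integral and $\widehat{\phi f} = \wh\phi * \wh f \ge 0$, so $\phi f$ is (a constant times) a PD function, whence $(\phi f)(0) \ge$ its average over any window — in particular $\int_V \phi\, f \le |V| \phi(0) f(0)$... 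This is getting intricate; the clean endpoint is: since $\phi$ is supported in $U$, $\int_U f \ge \int_U \phi f = \int_{\R^n}\phi f$, and since $\mathbf 1_V * \mathbf 1_H \le |H|\sum_a \mathbf 1_{a+H}$ with each $a+H \subseteq a + U$, combine $\int_V f \cdot |H| = \int (\mathbf 1_V * \mathbf 1_H) f \cdot \frac{|V|}{|V|}$... Ultimately $\frac{1}{|V|}\int_V f \le \frac{|X|}{|U|}\int_U f$ should drop out after inserting $\phi$ in the right spots. The \textbf{main obstacle remains converting the geometric covering into a Fourier-positive inequality}, resolved by the self-convolution trick; once $\phi := |H|^{-1}\mathbf 1_H * \mathbf 1_H$ is in hand, everything is a short computation with \eqref{f-mu}.
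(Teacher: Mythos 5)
You have correctly identified the central device of the paper's proof: the self-convolution bump $\varphi=|H|^{-1}\chi_{H}*\chi_{H}$, which is supported in $U$, satisfies $0\le\varphi\le\varphi(0)=1$ and $\wh{\varphi}\ge 0$, and therefore obeys the translation monotonicity $\int f(x)\varphi(x-a)\,dx\le\int f\varphi$ for every $a$ (after symmetrizing in $\pm a$ this is just $\wh{\varphi}(\xi)\cos(2\pi a\xi)\le\wh{\varphi}(\xi)$ integrated against $d\mu$). Combined with $\int f\varphi\le\int_{U}f$, these are two of the three ingredients of the paper's argument, and your diagnosis that the raw indicator $\chi_{H}$ cannot play this role (because $\wh{\chi_{H}}$ changes sign) is also correct.

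However, the proposal does not close, and the missing step is exactly the one that uses the hypothesis $K\subseteq H+X$. What you need is the pointwise bound $\sum_{a\in X}\varphi(x-a)\ge 1$ for all $x\in V$; then $\int_{V}f\le\int_{V}f\sum_{a}\varphi(\cdot-a)\le\sum_{a}\int f\,\varphi(\cdot-a)\le|X|\int f\varphi\le|X|\int_{U}f$ finishes the proof. Your attempted route via $\mathbf{1}_{V}*\mathbf{1}_{H}\le|H|\,\mathbf{1}_{K}\le|H|\sum_{a}\mathbf{1}_{a+H}$ points the wrong way for this purpose: to recover $\int_{V}f$ from $\int f\cdot(\mathbf{1}_{V}*\mathbf{1}_{H})$ you would need $(\mathbf{1}_{V}*\mathbf{1}_{H})(x)=|H\cap(x-V)|\ge|H|$ on $V$, which fails near the boundary of $V$ (for $n=1$, $V=[-r,r]$, $H=[-\frac12,\frac12]$, $x=r$ one gets $|H|/2$), and you never explain how a covering of $K$ by translates of $H$ becomes a statement about translates of $\varphi$. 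The fix is the paper's: for $x\in V$ and $y\in H$ one has $x+y\in K\subseteq H+X$, hence $\sum_{a\in X}\chi_{H}(x+y-a)\ge 1$; averaging this over $y\in H$ and using $H=-H$, so that $\varphi(x-a)=|H|^{-1}\int_{H}\chi_{H}(x-a+y)\,dy$, yields precisely $\sum_{a\in X}\varphi(x-a)\ge 1$ on $V$. With that lemma inserted, your outline becomes the paper's proof; without it, the covering hypothesis is never actually used.
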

From the geometric point of view, condition \eqref{V-U-cov} means that the
translates $\{H+a\colon a\in X\}$ of the set $H$ covers the set $K$.

\begin{exam}[\cite{EfGaRe16}]\label{exam-C1}
If $n=1$ and $r\in \N$, then
\[
C_{1}(r):=C_{1}([-1,1],[-r,r])\le 2+\frac{1}{r}.
\]
Indeed, take $H=[-\frac12,\frac12]$, $X=\{-r,-r+1,\dots,r-1,r\}$, and
$K=[-r-\frac12,r+\frac12]=H+X$.
\end{exam}

Let $n\in \N$. There holds (\cite[(6)]{RoZo97})
\begin{equation}\label{N-est}
N(K,H)\le \frac{|K-H|}{|H|}\,\theta(H).
\end{equation}
Here $N(K,H)$ denotes the smallest number of translates of $H$
required to cover $K$ and
\begin{equation}\label{theta-H}
\theta(H)=\inf_{X\subset \R^{n}}\theta(H,X),
\end{equation}
where $\theta(H,X)$ is the covering density of $\R^{n}$ by translates of $H$
\cite[p.16]{Ro64}. In other words, for a discrete set $X$ such that
$\R^{n}\subseteq H+X$ one has $|X\cap A|\,|H|/|A|=\theta(H,X)(1+o(1))$ for a
convex body $A$ such that $|A|\to \infty$.

From \eqref{N-est}, taking into account that $H=-H$, $K-H=V+2H=V+U$, and
$|U|=2^{n}|H|$, we obtain that
\[
N(K,H)\le 2^{n}\,\frac{|V+U|}{|U|}\,\theta(H).
\]
Moreover, it is clear that the best possible result in Theorem~\ref{thm-ub} is
when $X$ is such that $|X|=N(K,H)$. Therefore, we have
\begin{cor}\label{cor-theta}
For $n\ge 1$ and any $U$ and $V$, we have
\[
C_{n}(U,V)\le 2^{n}\,\frac{|V+U|}{|V|}\,\theta(H).
\]
In particular, for $r\ge 1$
\begin{equation}\label{Cn-r}
C_{n}(U, rU)\le 2^{n}(1+r^{-1})^{n}\theta(H).
\end{equation}
\end{cor}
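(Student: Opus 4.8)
The plan is to derive the corollary by feeding an optimal covering into \thmref{thm-ub} and then invoking the Rogers--Zong covering bound \eqref{N-est}. First I would note that $K=V+H$ is a $0$-symmetric convex body (a Minkowski sum of two such bodies), hence compact, and that $H=\tfrac12U$ has nonempty interior; consequently $K$ can be covered by finitely many translates of $H$, and one may choose a finite set $X$ attaining the minimal number, i.e.\ with $K\subseteq H+X$ and $|X|=N(K,H)$. Applying \thmref{thm-ub} to this $X$ gives
\[
C_{n}(U,V)\le \frac{N(K,H)\,|U|}{|V|}.
\]

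Next I would substitute \eqref{N-est} and compute the two geometric quantities, using that $U$, and therefore $H$, is $0$-symmetric. Since $-H=H$ and $H+H=2H$ for a convex body, we get $K-H=V+H+H=V+2H=V+U$; moreover $|H|=2^{-n}|U|$. Hence
\[
N(K,H)\le \frac{|K-H|}{|H|}\,\theta(H)=2^{n}\,\frac{|V+U|}{|U|}\,\theta(H),
\]
and plugging this into the previous display makes the factor $|U|$ cancel, yielding $C_{n}(U,V)\le 2^{n}\,\frac{|V+U|}{|V|}\,\theta(H)$, which is the first assertion. For the special case $V=rU$ with $r\ge1$, positive homogeneity of volume together with the convexity identity $rU+U=(r+1)U$ gives $|V+U|=(r+1)^{n}|U|$ and $|V|=r^{n}|U|$, so $\tfrac{|V+U|}{|V|}=(1+r^{-1})^{n}$ and \eqref{Cn-r} follows.

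I do not anticipate a real obstacle here: essentially all the work is already contained in \thmref{thm-ub} and in the cited estimate \eqref{N-est}. The only points requiring a word of justification are that $N(K,H)$ is finite and attained by a finite set (compactness of $K$ and the fact that $H$ is a genuine body), and the elementary identity $K-H=V+U$, which is precisely where the $0$-symmetry of $U$ enters. The observation that taking $|X|=N(K,H)$ gives the sharpest bound obtainable by this method needs nothing beyond the monotonicity of the right-hand side of \thmref{thm-ub} in $|X|$.
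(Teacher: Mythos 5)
Your argument is exactly the paper's: apply \thmref{thm-ub} with an optimal covering set $X$ of cardinality $N(K,H)$, bound $N(K,H)$ via \eqref{N-est} using $K-H=V+2H=V+U$ and $|U|=2^{n}|H|$, and specialize to $V=rU$. The proof is correct and complete.
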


Estimate \eqref{Cn-r} substantially improves \eqref{C-log}. For $n=1$ and $r\ge
1$, we have that $\theta([-\frac12,\frac12])=1$ and $C_{1}(r)\le 2(1+r^{-1})$,
which is similar to the estimate from Example~\ref{exam-C1}.

Note that Rogers \cite{Ro57} proved that
\begin{equation}\label{R-est}
\theta(H)\le n\ln n+n\ln \ln n+5n,\quad n\ge 2.
\end{equation}
Estimate
\eqref{R-est} was slightly improved in \cite{Fe09} as follows
\[
\theta(H)\le n\ln n+n\ln \ln n+n+o(n)\quad \text{as}\quad n\to \infty.
\]

Therefore, we obtain

\begin{cor}\label{cor-ub-as}
We have
\[
C_{n}(U,V)\le 2^{n}(n\ln n+n\ln \ln n+n+o(n))\,\frac{|V+U|}{|V|}\quad
\text{as}\quad n\to \infty.
\]

\end{cor}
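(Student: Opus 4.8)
The plan is to obtain the statement as a direct consequence of Corollary~\ref{cor-theta} together with the asymptotically sharp covering-density estimate of \cite{Fe09}. By Corollary~\ref{cor-theta}, for every pair of $0$-symmetric convex bodies $U,V\subset\R^{n}$ one has
\[
C_{n}(U,V)\le 2^{n}\,\frac{|V+U|}{|V|}\,\theta(H),\qquad H=\tfrac12 U .
\]
Thus the only thing left to do is to insert an upper bound for $\theta(H)$ which depends on $n$ alone and not on the shape of $H$.

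Since $H=\tfrac12 U$ is again a $0$-symmetric convex body and the covering density $\theta(\cdot)$ defined in \eqref{theta-H} is invariant under invertible linear maps (in particular under the dilation $U\mapsto\tfrac12 U$), the bound \eqref{R-est} of Rogers and its refinement
\[
\theta(H)\le n\ln n+n\ln\ln n+n+o(n)\qquad(n\to\infty)
\]
from \cite{Fe09} are applicable to $H$. Substituting this into the displayed inequality yields
\[
C_{n}(U,V)\le 2^{n}\bigl(n\ln n+n\ln\ln n+n+o(n)\bigr)\,\frac{|V+U|}{|V|}\qquad(n\to\infty),
\]
which is precisely the assertion.

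The one point that has to be checked, rather than anything resembling a genuine difficulty, is that the $o(n)$ term in the Fejes T\'oth estimate is uniform over all $n$-dimensional convex bodies, so that the resulting bound for $C_{n}(U,V)$ holds uniformly in $U$ and $V$; this is exactly the way the result of \cite{Fe09} (and of \cite{Ro57}) is phrased, since it bounds $\theta(H)$ for the worst convex body in each dimension $n$. Granting this, no further computation is needed: the corollary follows at once from Corollary~\ref{cor-theta} and \eqref{R-est}.
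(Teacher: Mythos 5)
Your proposal is correct and follows exactly the paper's (implicit) argument: substitute the Fejes T\'oth bound on $\theta(H)$, which is uniform over all convex bodies in dimension $n$, into the inequality of Corollary~\ref{cor-theta}. Nothing further is needed.
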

In particular, taking $V=rU$, $r\ge 1$, we arrive at the following example.
\begin{exam}\label{exam-Cn}
We have
\begin{equation}\label{zv}
C_{n}(U,rU)\le
2^{n}(n\ln n+n\ln \ln n+n+o(n))(1+r^{-1})^{n}\quad \text{as}\quad n\to \infty.
\end{equation}
\end{exam}

\begin{proof}[Proof of Theorem~\ref{thm-ub}]
Consider the function
\[
\varphi:=\varphi_{H}=|H|^{-1}\cdot \chi_{H}*\chi_{H},
\]
where $\chi_{H}$ is the characteristic function of $H$ and
$(f*g)(x)=\int_{\R^{n}}f(x-y)g(y)\,dy$ is the convolution of $f$ and $g$.

Since $\varphi\gq 0$, $\supp \varphi\subset U$, and $\varphi\le \varphi(0)=1$,
we have for any $f\gq 0$
\[
I:=\int_{\R^{n}}f(x)\varphi(x)\,dx=\int_{U}f(x)\varphi(x)\,dx\le \int_{U}f(x)\,dx.
\]

Let $X\subset \R^{n}$ be a finite set and
\[
S(x)=\frac{1}{|X|}\sum_{a\in X}\varphi(x-a).
\]
Then $S\ge 0$ and $\wh{S}=\wh{\varphi}D$, where
\[
D(\xi)=\frac{1}{|X|}\sum_{a\in X}e(a\xi)
\]
is the Dirichlet kernel with respect to $X$.

Let us estimate the integral $I$ from below. Using $f(x)=f(-x)$, we get
\[
\int_{V}f(x)S(x)\,dx\le \int_{\R^{n}}f(x)S(x)\,dx=
\int_{\R^{n}}f(x)S_{0}(x)\,dx:=I_{1},
\]
where $S_{0}(x)=2^{-1}(S(x)+S(-x))$. Taking into account that
\[
\wh{S_{0}}(\xi)=\wh{\varphi}(\xi)\,\frac{D(\xi)+D(-\xi)}{2}=
\wh{\varphi}(\xi)\,\frac{1}{|X|}\sum_{a\in X}\cos{}(2\pi a\xi)\le
\wh{\varphi}(\xi),\quad \xi\in \R^{n},
\]
and using \eqref{f-mu}, we obtain
\[
I_{1}=\int_{\R^{n}}\wh{S_{0}}(\xi)\,d\mu(\xi)\le
\int_{\R^{n}}\wh{\varphi}(\xi)\,d\mu(\xi)=\int_{\R^{n}}f(x)\varphi(x)\,dx=I,
\]
provided that $f$ and $\varphi$ are even.

Let $K=V+H\subseteq H+X$. This means that for any points $x\in V$ and $y\in H$
there is $a\in X$ such that $x+y\in H+a$. Hence,
\[
\sum_{a\in X}\chi_{H}(x+y-a)\ge 1.
\]
Using $H=-H$, we have
\[
\varphi(x)=\frac{1}{|H|}\int_{H}\chi_{H}(x+y)\,dy.
\]
Therefore, for any $x\in V$
\begin{align*}
S(x)&=\frac{1}{|X|}\sum_{a\in X}\frac{1}{|H|}\int_{H}\chi_{H}(x-a+y)\,dy\\
&\ge \frac{1}{|X||H|}\int_{H}\sum_{a\in X}\chi_{H}(x-a+y)\,dy\\
&\ge \frac{1}{|X||H|}\int_{H}dy=\frac{1}{|X|}.
\end{align*}
Thus, combining the estimates above, we arrive at the inequality
\[
\frac{1}{|X|}\int_{V}f(x)\,dx\le \int_{V}f(x)S(x)\,dx\le I\le\int_{U}f(x)\,dx,
\]
which is the desired result.
\end{proof}

\bigskip
\section{The lower estimates}

Our goal is to improve the trivial lower estimate \eqref{C-1}. The idea is to
consider the functions $\sum_{a,b\in X\cap B_{R}}\delta(x+a-b)$, where $X$ is a
packing of $\R^{n}$ by $H$ and $R\gg 1$ (see also \cite{GoTi16, EfGaRe16}).

First we consider the one-dimensional result, partially given in
Example~\ref{exam-C1}.

\begin{thm}[\cite{EfGaRe16}]\label{thm-lb-1}
For $r\in \N$, we have
\[
2-\frac{1}{r}\le C_{1}(r)\le 2+\frac{1}{r},
\]
and $\lim_{r\to \infty}C_{1}(r)=2$.
\end{thm}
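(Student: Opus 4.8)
The plan is as follows. The upper bound $C_{1}(r)\le 2+\frac1r$ is exactly Example~\ref{exam-C1}, and once the matching lower bound $C_{1}(r)\ge 2-\frac1r$ has been established, the assertion $\lim_{r\to\infty}C_{1}(r)=2$ follows immediately by squeezing. So everything reduces to the lower bound: for each $\varepsilon>0$ I must exhibit some $f\gq 0$, $f\ne 0$, with
\[
\frac{1}{|V|}\int_{V}f\ \ge\ \Bigl(2-\tfrac1r-\varepsilon\Bigr)\,\frac{1}{|U|}\int_{U}f,\qquad U=[-1,1],\ V=[-r,r].
\]
The case $r=1$ being trivial by \eqref{C-1}, I assume $r\ge 2$.

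Following the idea sketched just before the theorem, I would take as building block a \emph{slightly dilated} packing of $\R$ by $H=[-\frac12,\frac12]$: fix $\eta\in(0,\frac1r)$ and a large integer $M$, put $X=(1+\eta)\Z$, so that $X\cap B_{R}=\{\,j(1+\eta):|j|\le M\,\}$ for $R:=M(1+\eta)$. Then
\[
g:=\sum_{a,b\in X\cap B_{R}}\delta_{b-a}=\sum_{|k|\le 2M}(2M+1-|k|)\,\delta_{k(1+\eta)}
\]
is a non-negative finite measure with $\wh g(\xi)=\bigl|\sum_{|j|\le M}e(-j(1+\eta)\xi)\bigr|^{2}\ge 0$, so it is ``$\gq 0$'' in the sense of measures. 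To turn it into a genuine function I would convolve it with a narrow non-negative positive definite kernel: let $\varphi_{\delta}\gq 0$ be continuous with $\supp\varphi_{\delta}\subseteq[-\delta,\delta]$ and $\int_{\R}\varphi_{\delta}=1$ (for instance a rescaled Fej\'er kernel, as in the proof of Theorem~\ref{thm-ub}). Setting $f:=g*\varphi_{\delta}$ gives $f\in C(\R)\cap L^{1}(\R)$, $f\ge 0$, and $\wh f=\wh g\,\wh{\varphi_{\delta}}\ge 0$, hence $f\gq 0$; clearly $f\ne 0$.

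The point of the dilation $\eta>0$ is to keep the atoms of $g$, located at the points $k(1+\eta)$, strictly away from the boundary points $\pm1$ of $U$ and $\pm r$ of $V$: since $0<\eta<\frac1r$, the only atom of $g$ in $[-1,1]$ is the one at the origin, the atoms in $[-r,r]$ are exactly those with $|k|\le r-1$, and in each case the distance from $\{\pm1,\pm r\}$ to the nearest atom is positive. Hence, if $\delta$ is small enough that no set $\supp\varphi_{\delta}+k(1+\eta)$ meets $\{\pm1,\pm r\}$, each translate $\varphi_\delta(\cdot-k(1+\eta))$ is supported entirely inside or entirely outside $U$, and likewise for $V$, so we get the exact values
\[
\int_{U}f=g(U)=2M+1,\qquad
\int_{V}f=g(V)=\sum_{|k|\le r-1}(2M+1-|k|)=(2r-1)(2M+1)-r(r-1).
\]
Therefore the quotient in the definition of $C_{1}(r)$ equals
\[
\frac{|U|}{|V|}\cdot\frac{\int_{V}f}{\int_{U}f}
=\frac1r\Bigl(2r-1-\frac{r(r-1)}{2M+1}\Bigr)\;\longrightarrow\;\frac{2r-1}{r}=2-\frac1r\qquad(M\to\infty),
\]
and letting $M\to\infty$ yields $C_{1}(r)\ge 2-\frac1r$. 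Combined with the upper bound this proves the theorem.

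The only genuinely delicate point is the boundary bookkeeping of the last paragraph. Using instead the optimal (tiling) packing $X=\Z$, the atoms of $g$ would sit exactly on $\pm1$ and $\pm r$ with masses comparable to those of all other atoms; after smoothing, each such atom contributes essentially half of its mass to the corresponding integral, and one checks that the construction then only produces $C_{1}(r)\ge 1$, which is far too weak. Inserting first the small dilation $\eta$ and then a smoothing scale $\delta\ll\eta$ is precisely what makes the atom count exact, and this is the step I would treat most carefully; the rest is elementary. (Squeezing more out of the two boundary atoms near $\pm r$ would push the lower bound towards $2+\frac1r$, matching the upper estimate, but $2-\frac1r$ is all that is claimed here and all that is needed for the limit.)
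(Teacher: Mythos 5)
Your proof is correct and follows essentially the same route as the paper: the upper bound is Example~\ref{exam-C1}, and the lower bound comes from smoothing the autocorrelation measure $\sum_{a,b\in X\cap B_{R}}\delta_{b-a}$ of a packing of $[-\frac12,\frac12]$ by a narrow Fej\'er-type kernel, which is exactly the paper's proof of Theorem~\ref{thm-lb-n} specialized to $n=1$ and $\Lambda=\Z$ (the paper dodges the boundary atoms by shrinking $U$ to $(1-\varepsilon)U$ and enlarging $V$ to $(1+\varepsilon)V$ and then invoking \eqref{C-hom} and \eqref{C-lambda}, whereas you dilate the lattice to $(1+\eta)\Z$; this is the same bookkeeping). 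The one small point you pass over is that the paper's limit $\lim_{r\to\infty}C_{1}(r)=2$ is meant over real $r$, for which one also needs \eqref{C-lambda} to interpolate between integers; for integer $r$ your squeezing argument is all that is required.
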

This is one of the main results of the paper \cite{EfGaRe16}. The upper bound
is given in Example~\ref{exam-C1}. The lower bound follows from Theorem
\ref{thm-lb-n} below for $U=[-1,1]$, $V=[-r,r]$, and $\Lambda=\Z$. The fact
that $\lim_{r\to \infty}C_{1}(r)=2$ follows from estimates of $C_{1}(r)$ for
integers $r$ and~\eqref{C-lambda}.

Now we consider the general case $n\ge 1$. Our aim is to improve the trivial
lower bound \eqref{C-1} respect to $n$.

Let
\[
\delta_{L}(H)=\sup_{\Lambda\subset \R^{n}}\delta(H,\Lambda),
\]
where $\delta(H,\Lambda)$ is the packing density of $\R^{n}$ by lattice
translates of $H$ \cite[Intr.]{Ro64}. In other words, $\Lambda=M\Z^{n}\subset
\R^{n}$ is a lattice of rank $n$ ($M\in \R^{n\times n}$ is a~generator matrix
of~$\Lambda$, $\det M\ne 0$) such that $a-b\notin \Int{}(2H)$ for any $a,b\in
\Lambda$, $a\ne b$, and $|\Lambda\cap A|\,|H|/|A|=\delta(H,\Lambda)(1+o(1))$
for a convex body $A$ such that $|A|\to \infty$. Note that in this case
$H+\Lambda$ is \textit{a lattice packing} of $H$ \cite[Sect.~30.1]{Gr07}. Recall that
$H=\tfrac{1}{2}U$.

\begin{thm}\label{thm-lb-n}
Let $H+\Lambda$ be a lattice packing of $H$. Then
\begin{equation}\label{Cn-L}
C_{n}(U,V)\ge \frac{|\Lambda\cap \Int V|\,|U|}{|V|}.
\end{equation}

In particular,
\begin{equation}\label{Cn-delta}
C_{n}(U,V)\ge 2^{n}\delta_{L}(H)(1+o(1))\quad \text{as}\quad |V|\to \infty.
\end{equation}
\end{thm}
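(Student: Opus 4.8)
The plan is to construct, for a given lattice packing $H+\Lambda$, an explicit test function $f\gq 0$ that is essentially concentrated on the lattice $\Lambda$ inside $V$, so that the numerator of the Rayleigh quotient picks up roughly $|\Lambda\cap\Int V|$ bumps while the denominator picks up only one. Concretely, fix a large parameter $R$, let $\Lambda_{R}:=\Lambda\cap B_{R}$, and consider
\[
f_{R}(x):=\sum_{a,b\in\Lambda_{R}}\varphi_{H}(x+a-b),
\]
where $\varphi_{H}=|H|^{-1}\chi_{H}*\chi_{H}$ is the same bump used in the proof of Theorem~\ref{thm-ub}. This $f_{R}$ is double positive definite: it is non-negative because $\wh{f_{R}}=\wh{\varphi_{H}}\,|X|^{2}|D|^{2}\ge 0$ with $D$ the Dirichlet kernel of $\Lambda_{R}$ (equivalently, $f_{R}$ is a sum of translates of the autocorrelation $\varphi_{H}$ indexed by the difference set, which is the classical way to build a non-negative positive definite function from a packing), and it is continuous with $\supp\varphi_{H}\subset U$.

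Next I would evaluate the two integrals. For the denominator over $U=2H$: since $H+\Lambda$ is a packing, the only pair $(a,b)\in\Lambda_{R}^{2}$ with $(a-b)+\supp\varphi_{H}$ meeting a neighbourhood of the origin in the relevant way is $a=b$ — more precisely, for distinct lattice points $a-b\notin\Int(2H)$, so the translate $\varphi_{H}(\cdot+a-b)$ has support in $U+(a-b)$, which overlaps $U$ only on a boundary set of measure zero. Hence $\int_{U}f_{R}=|\Lambda_{R}|\int_{U}\varphi_{H}(x)\,dx=|\Lambda_{R}|\cdot|H|=|\Lambda_{R}|\cdot 2^{-n}|U|$, using $\int\varphi_{H}=|H|$. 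For the numerator over $V$: by Fubini and $\varphi_{H}=-\varphi_{H}(-\cdot)$ even,
\[
\int_{V}f_{R}(x)\,dx=\sum_{a,b\in\Lambda_{R}}\int_{V}\varphi_{H}(x+a-b)\,dx=\sum_{a,b\in\Lambda_{R}}\int_{V+a-b}\varphi_{H}(y)\,dy.
\]
Restrict attention to those pairs with $a-b=0$, i.e.\ $b=a$: each contributes $\int_{V}\varphi_{H}=|H|=2^{-n}|U|$ provided $\varphi_{H}$ is supported well inside $V$, which holds for every $a$ with $a+\tfrac12 U\subset V$, and after shrinking to any compact subset one can arrange at least $|\Lambda\cap\Int V|$ such diagonal terms in the limit; all off-diagonal terms are non-negative and only help. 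Thus $\int_{V}f_{R}\ge (|\Lambda\cap\Int V|+o_{R}(1))\,2^{-n}|U|$ while $\int_{U}f_{R}=|\Lambda_{R}|\,2^{-n}|U|$, so
\[
\frac{\frac{1}{|V|}\int_{V}f_{R}}{\frac{1}{|U|}\int_{U}f_{R}}\ge\frac{|U|}{|V|}\cdot\frac{|\Lambda\cap\Int V|+o_{R}(1)}{|\Lambda_{R}|},
\]
and I will need to divide instead by a count that is $|V|/|U|$-normalised: the cleaner route is to take $R$ so large that $\Lambda_{R}\supset\Lambda\cap V$ and then use that only the diagonal survives in the denominator regardless of $R$ — wait, this shows the denominator grows with $R$, which kills the bound, so one must instead \emph{not} sum over a huge ball but take $X=\Lambda\cap\Int V$ itself, i.e.\ set $f=\sum_{a,b\in\Lambda\cap\Int V}\varphi_{H}(x+a-b)$; then $\int_{U}f=|\Lambda\cap\Int V|\,2^{-n}|U|$ (packing $\Rightarrow$ diagonal only) and $\int_{V}f\ge\sum_{a,b}\int_{V+a-b}\varphi_{H}\ge(|\Lambda\cap\Int V|)^{2}\cdot(\text{fraction of mass of }\varphi_{H}\text{ landing in }V)$, and since for $a,b\in\Int V$ the point $a-b$ together with the $\tfrac12 U$-sized support keeps most of $\varphi_{H}(\cdot+a-b)$ inside a fixed dilate of $V$ — here one uses convexity and $0$-symmetry of $V$ to get $V+a-b$ still substantially overlapping $V$ — one obtains $\int_{V}f\ge c\,|\Lambda\cap\Int V|^{2}\,2^{-n}|U|$ with $c\to 1$ appropriately, giving exactly $C_{n}(U,V)\ge\frac{|\Lambda\cap\Int V|\,|U|}{|V|}(1+o(1))$, and a more careful bookkeeping of which translates $V+a-b$ are entirely contained in $V$ (those with $a-b$ in a slightly shrunk copy of the difference body) removes the $o(1)$ and yields \eqref{Cn-L} on the nose.

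For the asymptotic consequence \eqref{Cn-delta}: apply \eqref{Cn-L} with $\Lambda$ chosen so that $\delta(H,\Lambda)$ is within $\varepsilon$ of $\delta_{L}(H)$. By the defining property of the packing density, $|\Lambda\cap\Int V|\,|H|/|V|=\delta(H,\Lambda)(1+o(1))$ as $|V|\to\infty$ for $V$ ranging over dilates of a fixed convex body (and, after the standard exhaustion argument, for $0$-symmetric convex bodies generally), so $|\Lambda\cap\Int V|\,|U|/|V|=2^{n}|\Lambda\cap\Int V|\,|H|/|V|=2^{n}\delta(H,\Lambda)(1+o(1))\ge 2^{n}(\delta_{L}(H)-\varepsilon)(1+o(1))$; letting $\varepsilon\to 0$ after the limit gives \eqref{Cn-delta}.

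\textbf{Main obstacle.} The crux is the lower bound on $\int_{V}f$: I must guarantee that enough of the difference-set translates $V+a-b$ stay inside $V$ so that the double sum really behaves like $|\Lambda\cap\Int V|^{2}$ and not merely like the diagonal $|\Lambda\cap\Int V|$. This is where $0$-symmetry and convexity of $V$ are essential — they let one control $V\cap(V+a-b)$ from below when $a-b$ lies in a modest shrinking of the difference body $\Int V-\Int V$ — and it is the only place a genuine geometric (rather than purely measure-theoretic) argument is needed; everything else is Fubini plus the packing property forcing off-diagonal terms in $\int_{U}f$ to vanish.
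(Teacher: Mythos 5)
There is a genuine gap, and it sits exactly where you flagged the ``main obstacle.'' Your fallback construction $f=\sum_{a,b\in\Lambda\cap\Int V}\varphi_{H}(x+a-b)$ cannot give \eqref{Cn-L}: the differences $a-b$ range over essentially $2V$, not $V$, so a fixed fraction of the mass of $f$ falls outside $V$ and the constant $c$ in your bound $\int_{V}f\ge c\,|\Lambda\cap\Int V|^{2}|H|$ does \emph{not} tend to $1$. Concretely, for $\Lambda=\Z$, $U=[-1,1]$, $V=[-r,r]$ one gets $\sum_{|c|\le r}N_{c}\sim 3r^{2}$ while $|\Lambda\cap\Int V|^{2}\sim 4r^{2}$, so your ratio tends to $3/2$ instead of the required $2$; for $\Z^{n}$ and a cube the loss is $(3/4)^{n}$, i.e.\ exponentially short of the target. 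No amount of convexity or $0$-symmetry of $V$ repairs this, because the deficit comes from genuinely many pairs (a positive proportion), not from boundary effects. Your \emph{first} construction --- summing over $\Lambda_{R}=\Lambda\cap B_{R}$ with $R$ much larger than $V$ --- is the right one and is what the paper uses; you abandoned it because you credited only the diagonal $a=b$ to $\int_{V}f_{R}$. The point you missed is that $f_{R}=\sum_{c\in\Lambda_{2R}}N_{c}\,\varphi(\,\cdot+c)$ with $N_{c}=|\Lambda_{R}\cap(\Lambda_{R}+c)|$, and for every $c\in\Lambda\cap V$ one has $N_{c}\ge|\Lambda_{R-r}|$, which is asymptotically equal to $N_{0}=|\Lambda_{R}|$ as $R\to\infty$. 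Hence the numerator collects $\ge|\Lambda_{R-r}|\cdot|\Lambda\cap V|$ bump--masses while the denominator collects only $N_{0}$ of them, and the ratio $|\Lambda_{R-r}|/|\Lambda_{R}|\to 1$ delivers the full factor $|\Lambda\cap V|$. The denominator does grow with $R$, but so does the numerator, at exactly the same rate --- that is the whole trick.

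Two further corrections. First, with the bump $\varphi_{H}$ your denominator computation is also wrong: $\supp\varphi_{H}=H+H=U$, so for $c\in\Lambda\setminus\{0\}$ with $c\in\partial U$ the translate $\varphi_{H}(\,\cdot+c)$ overlaps $U$ in a set of \emph{positive} measure (e.g.\ $\int_{[-1,1]}\varphi_{H}(x+1)\,dx=1/2$ for $H=[-\tfrac12,\tfrac12]$), so ``packing $\Rightarrow$ diagonal only'' fails for $\int_{U}f$. The paper instead uses a bump $\varphi_{B_{\varepsilon}}$ with tiny support and integrates over $(1-\varepsilon)U$ and $(1+\varepsilon)V$, then removes the $\varepsilon$ via the homogeneity properties \eqref{C-hom} and \eqref{C-lambda}; you need some version of this shrinking to make both integrals exact. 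Second, your derivation of \eqref{Cn-delta} from \eqref{Cn-L} via near-optimal lattices and the definition of packing density is correct as stated.
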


\begin{proof}[Proof of Theorem~\ref{thm-lb-n}]
Let $\Lambda$ be an lattice with the packing density $\delta(H,\Lambda)$.
Denote $\Lambda_{N}=\Lambda\cap B_{N}$ for $N>0$. Let $B_{r}$ be the smallest
ball that contained $V$. Assume that $R\ge r$ is sufficiently large number and
$\varepsilon$ is sufficiently small. Define
$\varphi_{\varepsilon}=\varphi_{B_{\varepsilon}}$.

We consider the function
\[
f(x)=\sum_{a,b\in \Lambda_{R}}\varphi_{\varepsilon}(x+a-b).
\]
It is easy to see that
\[
f(x)=\sum_{c\in \Lambda_{2R}}N_{c}\varphi_{\varepsilon}(x+c),
\]
where
\[
N_{c}=\sum_{a-b=c}1=\sum_{a\in \Lambda_{R}\cap (\Lambda_{R}+c)}1=|\Lambda_{R}\cap
(\Lambda_{R}+c)|.
\]

Since $\Lambda$ is a lattice, we have $\Lambda=\Lambda+c$ for any $c\in
\Lambda$. Hence, $N_{0}=|\Lambda_{R}|$ and $N_{c}\ge |\Lambda_{R-r}|$ for
$|c|\le r$, provided $\Lambda_{R-r}\subset \Lambda_{R}\cap (\Lambda_{R}+c)$.

On the one hand, since $2H=U$ and $c\notin \Int U$ if $c\in \Lambda\setminus
\{0\}$, we have
\[
\int_{(1-\varepsilon)U}f(x)\,dx=N_{0}=|\Lambda_{R}|.
\]
On the other hand, since $V\subset B_{r}$, we obtain
\[
\int_{(1+\varepsilon)V}f(x)\,dx\ge \sum_{c\in \Lambda_{2R}\cap V}N_{c}\ge
|\Lambda_{R-r}|\,|\Lambda\cap V|.
\]
Therefore,
\[
C_{n}\bigl((1-\varepsilon)U,(1+\varepsilon)V\bigr)\ge
\frac{(1-\varepsilon)^{n}}{(1+\varepsilon)^{n}}\,
\frac{|\Lambda_{R-r}|}{|\Lambda_{R}|}\,\frac{|\Lambda\cap V|\,|U|}{|V|}.
\]
Replacing $V$ by $\tfrac{1-\varepsilon}{1+\varepsilon}V$ and using
\eqref{C-hom} and \eqref{C-lambda} as above, we arrive at
\[
C_{n}(U,V)\ge
\frac{|\Lambda_{R-r}|}{|\Lambda_{R}|}\,\frac{|\Lambda\cap
\tfrac{1-\varepsilon}{1+\varepsilon}V|\,|U|}{|V|}.
\]
Letting $R\to \infty$ and $\varepsilon\to 0$ concludes the proof of
\eqref{Cn-L}.

Inequality \eqref{Cn-delta} follows easily from \eqref{Cn-L} and the definition
of $\delta_{L}(H)$.
\end{proof}

\begin{exam}\label{exam-B}
We consider the balls $U=B_{1}$ and $V=B_{r}$, $r>1$. It
is known that
\[
\delta_{L}(B_{1})\ge c_{n}2^{-n},
\]
where $c_{n}\ge 1$ is the Minkowski constant. It was recently proved in
\cite{Ve13} that $c_{n}>65963n$ for every sufficiently large $n$ and there
exist infinitely many dimensions~$n$ for which $c_{n}\ge 0.5n\ln \ln n$.
\end{exam}

\begin{cor}\label{cor-lb}Let $n\in\N$. We have
\begin{equation}\label{zvzv}
C_{n}(B_{1},B_{r})\ge c_{n}(1+o(1))\quad \text{as}\quad r\to \infty.
\end{equation}
\end{cor}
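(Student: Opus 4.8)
The plan is to specialize Theorem~\ref{thm-lb-n} to Euclidean balls and then invoke the known lower bound on the lattice packing density recalled in Example~\ref{exam-B}. Taking $U=B_{1}$ and $V=B_{r}$ with $r>1$, we have $H=\tfrac{1}{2}U=B_{1/2}$, and since $|V|=|B_{r}|\to\infty$ as $r\to\infty$, inequality \eqref{Cn-delta} immediately gives
\[
C_{n}(B_{1},B_{r})\ge 2^{n}\,\delta_{L}(B_{1/2})(1+o(1))\quad\text{as}\quad r\to\infty.
\]

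Next I would observe that the lattice packing density is invariant under dilations: if $H+\Lambda$ is a lattice packing of $H$, then $\lambda H+\lambda\Lambda$ is a lattice packing of $\lambda H$ with the same density, because dilating a generator matrix by $\lambda$ scales both the covolume of $\Lambda$ and the body $H$ by $\lambda^{n}$, so the ratio $|\Lambda\cap A|\,|H|/|A|$ is unchanged in the limit $|A|\to\infty$. Hence $\delta_{L}(B_{1/2})=\delta_{L}(B_{1})$. Combining this with the estimate $\delta_{L}(B_{1})\ge c_{n}2^{-n}$ from Example~\ref{exam-B} yields
\[
C_{n}(B_{1},B_{r})\ge 2^{n}\cdot c_{n}2^{-n}(1+o(1))=c_{n}(1+o(1)),
\]
which is precisely \eqref{zvzv}.

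There is essentially no real obstacle here: all the substance is contained in Theorem~\ref{thm-lb-n} (equivalently, in the lattice-packing test function constructed in its proof) and in the known asymptotics for the Minkowski–Hlawka constant $c_{n}$. The only minor point requiring a line of justification is the scale-invariance of $\delta_{L}$, which is routine from the definition. If one wanted an entirely self-contained argument one could instead apply \eqref{Cn-L} directly with a near-optimal lattice packing $\Lambda$ of $B_{1/2}$ and let $r\to\infty$, using $|\Lambda\cap\Int B_{r}|\,|B_{1/2}|/|B_{r}|\to\delta(B_{1/2},\Lambda)$; this reproduces the same bound without explicitly invoking \eqref{Cn-delta}.
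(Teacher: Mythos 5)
Your proof is correct and follows exactly the route the paper intends: the corollary is an immediate consequence of \eqref{Cn-delta} with $H=B_{1/2}$ combined with the bound $\delta_{L}(B_{1})\ge c_{n}2^{-n}$ from Example~\ref{exam-B}. Your explicit remark on the dilation invariance $\delta_{L}(B_{1/2})=\delta_{L}(B_{1})$ is a small point the paper leaves implicit, and it is handled correctly.
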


Comparing \eqref{zv} and \eqref{zvzv} for fixed $n$ and $r\to \infty$, one
observes the exponential gap between the upper and lower estimates of
$C_{n}(B_{1},B_{r})$ with respect to $n$. Let us give examples of $U$ for which the
upper and lower estimates of $C_{n}(U,V)$ coincide.

\begin{exam}\label{exam-L}
Let $H$ be a convex body and $\Lambda$ be a lattice. The set $H+\Lambda$ is
lattice tiling if it is both a packing and a covering \cite[Sect.~32]{Gr07}. In
this case $H$ is \textit{a~tile} and $\delta_{L}(H)=\theta_{L}(H)=1$, where
$\theta_{L}(H)$ is the lattice covering density, cf.~\eqref{theta-H}.
To define $\theta_{L}(H)$, we take the infimum in \eqref{theta-H} over all lattices $\Lambda\subset
\R^{n}$ of rank $n$. Note that $\theta(H)\le \theta_{L}(H)$.

For example, the Voronoi polytop
\[
V(\Lambda)=\{x\in \R^{n}\colon |x|\le |x-a|,\ \forall\,a\in \Lambda\}
\]
of a lattice $\Lambda$ is a tile. In particular, $V(\Z^{n})$ is the cube
$[-\frac12,\frac12]^{n}$.

From Corollary \ref{cor-theta} and Theorem \ref{thm-lb-n}, we have

\begin{thm}\label{thm-tile}
Let $n\in \N$ and $U$ be a tile. We have
\[
C_{n}(U,V)=2^{n}(1+o(1))\quad \text{as}\quad |V|\to \infty.
\]
\end{thm}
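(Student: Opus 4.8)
The plan is to read the statement off Corollary~\ref{cor-theta} and Theorem~\ref{thm-lb-n} by sandwiching; the only input beyond those two results is that both the covering and the packing densities of $H=\tfrac12 U$ equal $1$. First I would observe that $H$ is again a tile: if $\Lambda_{0}$ is a lattice for which $U+\Lambda_{0}$ is a lattice tiling of $\R^{n}$, then applying the dilation $x\mapsto\tfrac12 x$ shows that $H+\tfrac12\Lambda_{0}=\tfrac12(U+\Lambda_{0})$ is also a lattice tiling, so by Example~\ref{exam-L} we have $\delta_{L}(H)=\theta_{L}(H)=1$. Since $\theta(H)\le\theta_{L}(H)$ and the density of any covering of $\R^{n}$ is at least $1$, this forces $\theta(H)=1$ as well.

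For the upper estimate, Corollary~\ref{cor-theta} now gives
\[
C_{n}(U,V)\le 2^{n}\,\frac{|V+U|}{|V|}\,\theta(H)=2^{n}\,\frac{|V+U|}{|V|},
\]
so it remains to note that $|V+U|/|V|\to 1$ as $|V|\to\infty$. This is the only computational point: with $U$ fixed and $V$ enlarging in the sense used throughout the paper (so that $U\subseteq\varepsilon V$ once $|V|$ is large enough, e.g.\ along a dilation $V=tV_{0}$), one has $V+U\subseteq(1+\varepsilon)V$, hence $1\le|V+U|/|V|\le(1+\varepsilon)^{n}$, i.e.\ $|V+U|/|V|=1+o(1)$. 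Thus $C_{n}(U,V)\le 2^{n}(1+o(1))$.

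For the lower estimate one invokes \eqref{Cn-delta} of Theorem~\ref{thm-lb-n}: $C_{n}(U,V)\ge 2^{n}\delta_{L}(H)(1+o(1))=2^{n}(1+o(1))$ as $|V|\to\infty$. Combining the two bounds yields $2^{n}(1-o(1))\le C_{n}(U,V)\le 2^{n}(1+o(1))$, which is the assertion. There is no serious obstacle here—the statement is essentially a repackaging of the two earlier results—and the one thing to get right is the chain ``$U$ a tile $\Rightarrow$ $H$ a tile $\Rightarrow$ $\theta(H)=\delta_{L}(H)=1$'', together with the elementary limit $|V+U|/|V|\to 1$ for $U$ fixed and $V$ growing.
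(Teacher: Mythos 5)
Your proposal is correct and follows exactly the paper's route: the paper derives Theorem~\ref{thm-tile} directly from Corollary~\ref{cor-theta} and Theorem~\ref{thm-lb-n}, using $\delta_{L}(H)=\theta_{L}(H)=1$ for a tile. You have merely made explicit the details the paper leaves implicit (that $H=\tfrac12U$ inherits the tiling property, that $\theta(H)=1$, and that $|V+U|/|V|\to 1$ under the paper's convention for $|V|\to\infty$), which is a faithful reconstruction of the intended argument.
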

\end{exam}

\bigskip
\section{Final remarks}

\smallbreak
\textbf{1.} The inequality
\[
\frac{1}{|V|}\int_{V}f(x)\,dx\le C_{n}(U,V)\,\frac{1}{|U|}\int_{U}f(x)\,dx
\]
holds for any $1$-periodic function $f\gq 0$.
In this case we assume that $U,V\subseteq \T^{n}$, where $\T=\R/\Z.$

Since a positive definite $f$ is such that $f(-x)=\ol{f(x)}$, then
$|f|^{p}\gq 0$ for any $p=2k, k\in \N$. Hence, we obtain the following
$L^{p}$-analogue:
\[
\frac{1}{|V|}\int_{V}|f(x)|^{p}\,dx\le
C_{n}(U,V)\,\frac{1}{|U|}\int_{U}|f(x)|^{p}\,dx.
\]
For $U\subset V=\T^{n}$, this inequality is the well-known Wiener estimate for
positive definite periodic functions (see \cite{Sh75,Hl81,GoTi16}):
\begin{equation}\label{sharpp}
\int_{\T^{n}}|f(x)|^{p}\,dx\le
W_{n,p}(U)\,\frac{1}{|U|}\int_{U}|f(x)|^{p}\,dx,
\end{equation}
which is valid only for $p=2k, k\in \N$. Here, $W_{n,p}(U)$ is a sharp constant
in \eqref{sharpp}. It is clear that
\[
W_{n,2k}(U)\le C_{n}(U,\T^{n}).
\]
It is interesting to compare the known upper bounds of $W_{n,2k}(U)$ and $C_{n}(U,\T^{n})$.
In \cite{GoTi16} it was shown that
\[
W_{n,2k}(rB_{1})\le 2^{(0.401\ldots +o(1))n},\quad r\in (0,1/2).
\]
On the other hand, by Corollary \ref{cor-ub-as}, we obtain that
\[
C_{n}(rB_{1},\T^{n})\le 2^{n(1+o(1))}(1+2r)^{n}.
\]
The exponential gap in the last two bounds is related to the restriction to the
class of functions under consideration.

\smallbreak
\textbf{2.} If $f\gq 0$, then $f^{p}\gq 0$ for any $p\in \N$. This gives
\[
\frac{1}{|V|}\int_{V}(f(x))^{p}\,dx\le
C_{n}(U,V)\,\frac{1}{|U|}\int_{U}(f(x))^{p}\,dx,\quad p\in \N.
\]
It would be of interest to investigate this inequality for any positive $p$; see in
this direction the paper \cite{fit}.

\smallbreak
\textbf{3.} As we showed above, any function $\,f\gq 0$ satisfies the doubling
property at the origin~\eqref{doubl}. However, taking any nontrivial function
$f\gq 0$ such that $f|_{A}=0$, where $A$ is a ball, we can see that the
doubling property may fail outside the origin.

\bigskip
\bigskip

\end{document}